\documentclass[11pt,reqno]{amsart}
\usepackage[utf8]{inputenc}
\usepackage{lmodern} 
\usepackage{amsmath,amssymb,amsfonts,amsthm,latexsym}
\usepackage{graphicx,multirow,enumerate}
\usepackage{graphicx}
\usepackage{tikz}
\usetikzlibrary{positioning, arrows.meta}
\usepackage{booktabs} 
\usepackage{float}
\usepackage [numbers,sort&compress]{natbib}
\usepackage{caption}
\usepackage{array}      
\usepackage{cellspace}  
\usepackage{mathrsfs}   
\usepackage[all]{xy}    
\usepackage{color,xcolor} 
\usepackage{cancel} 
\usepackage{url}        
\usepackage{longtable}
\usepackage{hyperref}
\theoremstyle{plain}
\newtheorem{thm}{Theorem}[section]
\newtheorem{lem}[thm]{Lemma}

\theoremstyle{definition}

\newtheorem{defn}[thm]{Definition}
\newtheorem{rem}[thm]{Remark}

\begin{document}

\title[]{Local derivations is a Lie algebra}

\author{Zeyu Hao}
\address{(Hao) School of Mathematics and Statistics \\Northeast Normal University\\Changchun, Jilin, 130024\\P.R. China}
\email{haozeyu@nenu.edu.cn}

\author{Liangyun Chen*}
\address{(Chen) School of Mathematics and Statistics \\Northeast Normal University\\Changchun, Jilin, 130024\\P.R. China}
\email{chenly640@nenu.edu.cn}

\begin{abstract}
In this paper, we utilize the reflexive hull to unify the concepts of local and almost inner derivations. By investigating the algebraic structure of reflexive hulls, we prove the conjecture on the structure of local derivations proposed by Ayupov, Elduque, and Kudaybergenov (\emph{J. Pure Appl. Algebra}, 2023), as well as the conjecture concerning the structure of almost inner derivations proposed by Burde, Dekimpe, and Verbeke (\emph{J. Algebra Appl.}, 2018). 

\textit{Key words: local derivation, almost inner derivation} 

\textit{MSC2020: 17A36, 17B40} 

\thanks{*Corresponding author.}
\thanks{This work is supported by the project of NNSF of
China (No. 12271085)}    
\end{abstract}

\maketitle
\section{Introduction}

Derivations are classical tools in the study of algebraic structures. They describe infinitesimal symmetries and are closely related to automorphisms, extensions, deformations, and nilpotency. Many variants and generalizations of derivations have also been studied. For a broad account of these topics, we refer the reader to the recent survey of Kaygorodov~\cite{Ivan2024} and the references therein.

In recent years, two classes of linear mappings related to derivations have attracted considerable attention: local derivations and almost inner derivations. 

A linear map $\Delta\colon A\to A$ is called a \emph{local derivation} if, for every $x\in A$, there exists a derivation $D_x$ of $A$, depending on $x$, such that
\[
\Delta(x)=D_x(x).
\]
Local derivations were introduced in operator algebra theory by Kadison~\cite{Kadison1990} and were further studied by Larson and Sourour~\cite{LarsonSourour1990} and Johnson~\cite{Johnson2001}. These developments led Ayupov, Elduque, and Kudaybergenov~\cite{AyupovElduqueKudaybergenov2023} to ask whether, for a finite-dimensional algebra $A$, $\operatorname{LocAut}(A)$ is a Lie group and $\operatorname{LocDer}(A)$ is a Lie algebra under the commutator bracket, and, if so, whether
\[
\operatorname{Lie}\bigl(\operatorname{LocAut}(A)\bigr)
\cong
\operatorname{LocDer}(A)
\]
naturally. Recent computational work of Hao and Chen~\cite{HaoChen20251} provides evidence for the local derivation part of this question by determining local derivations for broad classes of low-dimensional Lie algebras and verifying the Lie algebra property of $\operatorname{LocDer}$ in several cases.

Almost inner derivations form another class considered in this paper. A derivation $D$ of a Lie algebra $L$ is called \emph{almost inner} if
\[
D(x)\in[L,x]
\]
for every $x\in L$. This notion arose in the work of Gordon and Wilson~\cite{GordonWilson1984}. Burde, Dekimpe, and Verbeke~\cite{BurdeDekimpeVerbeke2018} later studied it systematically and asked whether $\operatorname{AID}(L)$ is always a Lie ideal of $\operatorname{Der}(L)$. Kunyavskii and Ostapenko~\cite{KunyavskiiOstapenko2023} proved this for finite-dimensional nilpotent Lie algebras over $\mathbb C$. More recently, Serganova and Vaintrob~\cite{SerganovaVaintrob2026} resolved the conjecture by proving that $\operatorname{AID}(L)$ is always a Lie ideal of $\operatorname{Der}(L)$.

At first glance, local derivations and almost inner derivations appear to be unrelated. Indeed, they have historically been studied independently. However, Hao and Chen~\cite{HaoChen20251} recently revealed a close connection between these two classes of derivations. The main definition behind this connection is algebraic reflexivity. For a linear subspace $E\subseteq\operatorname{End}(V)$, its algebraic reflexive hull, denoted by $\operatorname{Ref}_{a}(E)$, records the operators that can be matched at each vector by operators from $E$. Algebraic reflexivity has been studied in operator theory and linear interpolation by Hadwin~\cite{Hadwin1983,Hadwin1994}, Larson~\cite{Larson1988}, and Bračič~\cite{Bracic2009,Bracic2023}. In this framework, local derivations are described by the algebraic reflexive hull of the derivation algebra, while almost inner derivations are described through the corresponding hull of the inner derivations. Thus, algebraic reflexivity provides a natural common framework for the two subjects.

Our main result shows that if $E$ is a Lie subalgebra of $\operatorname{End}(V)$, then its algebraic reflexive hull $\operatorname{Ref}_{a}(E)$ is also a Lie subalgebra. It follows that $\operatorname{LocDer}(A)$ is a Lie algebra under the commutator bracket. Thus, we answer the part of the question of Ayupov, Elduque, and Kudaybergenov concerning local derivations. The same result applies to algebras equipped with several multilinear operations, possibly of different arities. We also prove a general Lie-ideal result for algebraic reflexive hulls. Applied to inner derivations, it gives another proof that $\operatorname{AID}(L)$ is a Lie ideal of $\operatorname{Der}(L)$. Using the notion of inner derivations introduced by Schafer~\cite{Schafer1949}, we extend this result to arbitrary algebras.

We further prove that $\operatorname{Ref}_{a}(E)$ contains an invertible operator if and only if $E$ contains an invertible operator. Consequently, an algebra admits an invertible local derivation if and only if it admits an invertible derivation. We establish the same local-to-global result for Leibniz derivations. Together with the results of Jacobson~\cite{Jacobson1955} and Kaygorodov and Popov~\cite{KaygorodovPopov2016}, these results allow the corresponding structural criteria to be stated in terms of local derivations.

Throughout this paper, unless otherwise stated, all vector spaces are finite-dimensional over $\mathbb C$, and by an algebra we mean such a vector space equipped with a single bilinear multiplication.

\section{Algebraic Structure and Applications of reflexive hulls}\label{22}

\subsection{Reflexive hulls}

\begin{defn}\label{666}\cite{Larson1988}
Let $V$ be a vector space and let $E\subseteq \operatorname{End}(V)$ be a set.
The \emph{algebraic reflexive cover}, also called the \emph{algebraic reflexive hull}, of $E$ is defined by
\[
        \operatorname{Ref}_{a}(E)
        =
        \{f\in\operatorname{End}(V)\mid f(v)\in E(v)
        \text{ for all }v\in V\},
\]
where $E(v):=\{g(v)\mid g\in E\}.$
\end{defn}

\begin{rem}
The reflexive hull provides a natural common framework for local derivations and almost inner derivations. Indeed, if $A$ is an algebra and $L$ is a Lie algebra, then
\[
        \operatorname{LocDer}(A)
        =
        \operatorname{Ref}_{a}(\operatorname{Der}(A)),
        \qquad
        \operatorname{AID}(L)
        =
        \operatorname{Der}(L)\cap
        \operatorname{Ref}_{a}(\operatorname{Inn}(L)).
\]
\end{rem}

\begin{lem}\label{1}\cite{Bracic2009}
Let $V$ be a vector space and let $E \subseteq \operatorname{End}(V)$. If $g \in \operatorname{GL}(V)$ satisfies $g E g^{-1} \subseteq E$, then
\[
g \operatorname{Ref}_{a}(E) g^{-1} \subseteq \operatorname{Ref}_{a}(E).
\]
\end{lem}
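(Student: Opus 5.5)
The argument is a direct pointwise check from Definition \ref{666}; no earlier results are needed. Fix $f\in\operatorname{Ref}_{a}(E)$ and set $h=gfg^{-1}\in\operatorname{End}(V)$. We must show $h(v)\in E(v)$ for every $v\in V$.

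For a given $v\in V$, let $w=g^{-1}(v)$. Since $f\in\operatorname{Ref}_{a}(E)$, there is some $e\in E$, depending on $w$, with $f(w)=e(w)$. Then
\[
h(v)=g\bigl(f(g^{-1}v)\bigr)=g\bigl(e(g^{-1}v)\bigr)=(geg^{-1})(v).
\]
By hypothesis $geg^{-1}\in gEg^{-1}\subseteq E$, so $h(v)\in E(v)$. As $v$ was arbitrary, $h\in\operatorname{Ref}_{a}(E)$, which proves $g\operatorname{Ref}_{a}(E)g^{-1}\subseteq\operatorname{Ref}_{a}(E)$.

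There is no real obstacle here. The only point needing care is that the operator $e$ is chosen for the vector $g^{-1}v$ rather than for $v$. The hypothesis $gEg^{-1}\subseteq E$ exactly absorbs this change of vector, so the argument uses neither linearity of $E$ nor finite-dimensionality.
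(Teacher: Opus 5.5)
Your proof is correct and complete. Choosing $e\in E$ for the vector $g^{-1}v$ and noting that $gfg^{-1}(v)=(geg^{-1})(v)$ with $geg^{-1}\in E$ is the standard direct argument; the paper does not prove this lemma itself but quotes it from \cite{Bracic2009}, and your remark that neither linearity of $E$ nor finite-dimensionality is used is accurate.
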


In what follows, we recall several standard results from the theory of matrix Lie groups.

Let $V$ be a vector space. For $X \in \operatorname{End}_{\mathbb{C}}(V)$ and $t \in \mathbb{C}$, the matrix exponential is defined by
\[
e^{tX} := \sum_{n=0}^{\infty} \frac{t^n X^n}{n!}.
\]
Since $V$ is finite-dimensional, this series converges in $\operatorname{End}_{\mathbb{C}}(V)$, and $e^{tX}$ is invertible with inverse $e^{-tX}$.

\begin{lem}\label{2}\cite{Hall2003}
Let $X, Y \in \operatorname{End}_{\mathbb{C}}(V)$, and define the adjoint action $\operatorname{ad}_X(Y) = [X, Y] = XY - YX$. Then for any $t \in \mathbb{C}$,
\[
e^{tX} Y e^{-tX} = e^{t \operatorname{ad}_X}(Y).
\]
\end{lem}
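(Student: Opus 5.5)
The identity is the standard relation between conjugation by a one-parameter group and the exponential of the adjoint operator. My plan is to prove it by showing that both sides, viewed as functions of $t$, satisfy the same linear ODE with the same initial value. A direct power-series comparison would also work, and I sketch it as a cross-check.

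Fix $X,Y$ and set $F(t)=e^{tX}Ye^{-tX}$ and $G(t)=e^{t\operatorname{ad}_X}(Y)$. Both are entire functions of $t$ with values in the finite-dimensional space $\operatorname{End}_{\mathbb C}(V)$. Here $\operatorname{ad}_X$ is itself a linear operator on this finite-dimensional space, so its exponential converges for the same reason as $e^{tX}$.

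The ODE argument then runs in three steps.
\begin{itemize}
\item Differentiating term by term gives $\frac{d}{dt}e^{tX}=Xe^{tX}=e^{tX}X$. The product rule then yields
\[
F'(t)=Xe^{tX}Ye^{-tX}-e^{tX}Ye^{-tX}X=\operatorname{ad}_X(F(t)).
\]
\item Likewise $G'(t)=\operatorname{ad}_X\bigl(G(t)\bigr)$.
\item Since $F(0)=G(0)=Y$, both functions solve the linear system $Z'=\operatorname{ad}_X Z$ with the same initial value. Uniqueness for linear ODEs with constant coefficients gives $F=G$.
\end{itemize}
To avoid citing ODE theory, I would check uniqueness directly: $H(t)=e^{-t\operatorname{ad}_X}F(t)$ has derivative $0$, so $H(t)=H(0)=Y$, and hence $F(t)=e^{t\operatorname{ad}_X}Y$. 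Because $t$ is complex, I would phrase the derivative argument for holomorphic functions of $t$. Alternatively, I would run it for real $t$ and extend by the identity theorem, since both sides are entire.

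For the series cross-check, write $L_X(Z)=XZ$ and $R_X(Z)=ZX$. These commute as operators on $\operatorname{End}(V)$, and $\operatorname{ad}_X=L_X-R_X$. Then
\[
e^{tX}Ye^{-tX}=e^{tL_X}e^{-tR_X}(Y)=e^{t(L_X-R_X)}(Y),
\]
where the last equality holds because commuting operators satisfy $e^{A+B}=e^Ae^B$.

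The only step needing care is justifying term-by-term differentiation, or equivalently the exponential law for commuting operators. Both follow from absolute convergence of the exponential series in finite dimensions. I do not expect a real obstacle, so I would present the ODE argument and cite the series identity as the justification.
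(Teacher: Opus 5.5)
Your proof is correct. The paper gives no argument for this lemma and simply cites Hall. There the identity appears as $\operatorname{Ad}_{e^{X}}=e^{\operatorname{ad}_X}$ and is obtained structurally, not by direct computation:
\begin{enumerate}
\item $\operatorname{Ad}$ is a Lie group homomorphism whose differential is $\operatorname{ad}$.
\item Any such homomorphism $\Phi$ with differential $\phi$ satisfies $\Phi(e^{X})=e^{\phi(X)}$, ultimately because continuous one-parameter subgroups are exponentials.
\item Replacing $X$ by $tX$ gives the lemma for complex $t$.
\end{enumerate}

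Your route is self-contained and more elementary. The ODE version, with uniqueness checked by hand via $H(t)=e^{-t\operatorname{ad}_X}F(t)$, uses only termwise differentiation of the exponential series. It also fits naturally with the paper's next step, since Lemma~\ref{3} is exactly the statement $F'(0)=\operatorname{ad}_X(Y)$.

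Your ``cross-check'' via the commuting operators $L_X$ and $R_X$ on $\operatorname{End}(V)$ is in fact the shortest complete proof. It needs no calculus, only $e^{A+B}=e^{A}e^{B}$ for commuting $A,B$, and it holds verbatim for every $t\in\mathbb C$. You could therefore make it the main argument and drop the holomorphy and identity-theorem remarks.

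What Hall's approach buys in exchange is generality: it identifies $e^{\operatorname{ad}_X}$ as the image of $e^{X}$ under the adjoint representation of any matrix Lie group. That is more than this paper needs.
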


\begin{lem}\label{3}\cite{Hall2003}
Let $X, T \in \operatorname{End}_{\mathbb{C}}(V)$. Then
\[
\left.\frac{d}{dt}\right|_{t=0} e^{tX} T e^{-tX} = [X, T].
\]
\end{lem}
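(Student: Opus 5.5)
The plan is to obtain the statement directly from Lemma~\ref{2}, taking $Y=T$, and then differentiate a convergent power series term by term. By Lemma~\ref{2},
\[
e^{tX} T e^{-tX} = e^{t\operatorname{ad}_X}(T) = \sum_{n=0}^{\infty} \frac{t^n}{n!}\operatorname{ad}_X^n(T).
\]
Here $\operatorname{ad}_X$ is a linear operator on the finite-dimensional space $\operatorname{End}_{\mathbb{C}}(V)$. So this is the exponential of an operator applied to the fixed vector $T$. Fix any submultiplicative norm on $\operatorname{End}_{\mathbb{C}}(V)$. Then $\|\operatorname{ad}_X^n(T)\|\le (2\|X\|)^n\|T\|$, so the series converges absolutely and locally uniformly in $t$. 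Hence $t\mapsto e^{tX}Te^{-tX}$ is an entire $\operatorname{End}_{\mathbb{C}}(V)$-valued function of $t$, and its derivative is obtained by termwise differentiation:
\[
\frac{d}{dt} e^{t\operatorname{ad}_X}(T) = \sum_{n=1}^{\infty} \frac{t^{n-1}}{(n-1)!}\operatorname{ad}_X^{n}(T).
\]
Setting $t=0$ kills every term except $n=1$. That term is $\operatorname{ad}_X(T)=[X,T]$, which is the claim.

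As a cross-check, one could avoid Lemma~\ref{2} and argue directly with the product rule. The maps $t\mapsto e^{tX}$ and $t\mapsto e^{-tX}$ are differentiable with derivatives $Xe^{tX}$ and $-Xe^{-tX}$ respectively; this follows by differentiating their defining series termwise. The product rule for the bilinear, hence smooth, multiplication on $\operatorname{End}_{\mathbb{C}}(V)$ then gives
\[
\frac{d}{dt} e^{tX}Te^{-tX} = Xe^{tX}Te^{-tX} - e^{tX}TXe^{-tX}.
\]
At $t=0$ this equals $XT-TX=[X,T]$.

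The only point needing care is justifying termwise differentiation of the exponential series. This is the standard fact that a power series with infinite radius of convergence, with coefficients in a finite-dimensional normed space, can be differentiated term by term. I will cite it rather than reprove it, since both the norm bound above and the entire-function property are routine in finite dimensions. I do not expect any real obstacle beyond this.
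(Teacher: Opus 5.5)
Your proof is correct. The paper gives no argument for this lemma and only cites Hall. There the statement is obtained exactly as in your cross-check: differentiate $t\mapsto e^{tX}Te^{-tX}$ with the product rule, using $\frac{d}{dt}e^{\pm tX}=\pm Xe^{\pm tX}$, and evaluate at $t=0$.

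Your main route is different. You expand $e^{t\operatorname{ad}_X}(T)$ as an entire power series via Lemma~\ref{2} and read off the linear coefficient. This is valid, and your bound $\|\operatorname{ad}_X^n(T)\|\le(2\|X\|)^n\|T\|$ is right. It also gives more, namely every derivative $\left.\frac{d^k}{dt^k}\right|_{t=0}e^{tX}Te^{-tX}=\operatorname{ad}_X^k(T)$.

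The cost is logical economy. In the usual development, the identity of Lemma~\ref{2} is itself deduced from this first-derivative computation, by an ODE or one-parameter-subgroup argument. Starting from Lemma~\ref{2} is legitimate here only because the paper states it as an independently cited fact.

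To make your main route self-contained, prove Lemma~\ref{2} directly. Write $\operatorname{ad}_X=L_X-R_X$ with the commuting operators $L_X(Y)=XY$ and $R_X(Y)=YX$ on $\operatorname{End}_{\mathbb{C}}(V)$. Then $e^{t\operatorname{ad}_X}=e^{tL_X}e^{-tR_X}$, which sends $Y$ to $e^{tX}Ye^{-tX}$. The product-rule argument needs none of this and is the more elementary of the two.
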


Building upon the preceding lemmas, we now establish the following key lemma.

\begin{lem}\label{4}
Let $E \subseteq \operatorname{End}(V)$ be a linear subspace. Define the normalizer of $E$ in $\operatorname{End}(V)$ by
\[
N(E) := \{ g \in \operatorname{End}(V) \mid [g, E] \subseteq E \}.
\]
If $f \in N(E)$, then $f \in N(\operatorname{Ref}_{a}(E))$.
\end{lem}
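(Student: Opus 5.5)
The plan is to pass from the infinitesimal condition $[f,E]\subseteq E$ to the group-level condition of Lemma~\ref{1} by exponentiating, and then to differentiate back at $t=0$ using Lemma~\ref{3}.

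\emph{Step 1: exponentiate.} Fix $f\in N(E)$ and $t\in\mathbb{C}$, and put $g_t:=e^{tf}\in\operatorname{GL}(V)$. By Lemma~\ref{2}, for $Y\in E$ we have
\[
g_tYg_t^{-1}=e^{t\operatorname{ad}_f}(Y)=\sum_{n\ge 0}\frac{t^n}{n!}\operatorname{ad}_f^n(Y).
\]
Since $\operatorname{ad}_f(E)\subseteq E$, each partial sum lies in $E$. Because $E$ is a linear subspace of a finite-dimensional space, it is closed, so the limit lies in $E$. Hence $g_tEg_t^{-1}\subseteq E$ for every $t$.

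\emph{Step 2: transfer to the hull.} By Lemma~\ref{1}, $g_t\operatorname{Ref}_a(E)g_t^{-1}\subseteq\operatorname{Ref}_a(E)$ for all $t$.

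\emph{Step 3: differentiate.} Take $T\in\operatorname{Ref}_a(E)$. The curve $t\mapsto e^{tf}Te^{-tf}$ lies entirely in $\operatorname{Ref}_a(E)$. By Lemma~\ref{3}, its derivative at $0$ is $[f,T]$, namely the limit of the difference quotients
\[
\frac{e^{tf}Te^{-tf}-T}{t}.
\]
To conclude that $[f,T]\in\operatorname{Ref}_a(E)$, two facts are needed:
\begin{enumerate}
\item[(i)] $\operatorname{Ref}_a(E)$ is a linear subspace, so the difference quotients lie in it;
\item[(ii)] $\operatorname{Ref}_a(E)$ is closed, so the limit lies in it.
\end{enumerate}
For (i): if $f_1,f_2\in\operatorname{Ref}_a(E)$ and $\lambda\in\mathbb{C}$, then for each $v$ the values $f_1(v)$ and $f_2(v)$ lie in the subspace $E(v)$, hence so does $(f_1+\lambda f_2)(v)$. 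For (ii): $\operatorname{Ref}_a(E)$ is an intersection over $v\in V$ of preimages of the subspaces $E(v)$ under the linear evaluation maps $h\mapsto h(v)$. Each such preimage is a linear subspace of $\operatorname{End}(V)$ and therefore closed, since $\operatorname{End}(V)$ is finite-dimensional. An intersection of closed sets is closed.

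\emph{Expected obstacle.} The main care point is Step 3: the derivative argument needs exactly the linearity and closedness in (i) and (ii). Both are routine once written, as shown above.

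A purely algebraic alternative avoids analysis altogether. One can check directly that $[f,T](v)=f(Tv)-T(fv)$, and try to write this as an element of $E(v)$ via pointwise interpolants. This fails, however, because $T(fv)$ is matched by an operator depending on $fv$ rather than on $v$. So I would commit to the exponential argument.
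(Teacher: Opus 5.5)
Your proof is correct and follows the paper's argument step for step. You exponentiate via Lemma~\ref{2} to get $e^{tf}Ee^{-tf}\subseteq E$, transfer this to the hull with Lemma~\ref{1}, and differentiate at $t=0$ via Lemma~\ref{3}; your explicit checks that $E$ is closed and that $\operatorname{Ref}_{a}(E)$ is a closed linear subspace supply details the paper only asserts.
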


\begin{proof}
Since $f \in N(E)$, we have $\operatorname{ad}_f(E) \subseteq E$. Consequently, the exponential series yields $e^{t \operatorname{ad}_f}(E) \subseteq E$ for all $t \in \mathbb{C}$. For any $g \in E$, by Lemma \ref{2}, we have
\[
e^{tf} g e^{-tf} = e^{t \operatorname{ad}_f}(g) \in E,
\]
which implies $e^{tf} E e^{-tf} \subseteq E$. As $e^{tf} \in \operatorname{GL}(V)$, Lemma \ref{1} ensures that
\[
e^{tf} \operatorname{Ref}_{a}(E) e^{-tf} \subseteq \operatorname{Ref}_{a}(E).
\]
Now, fix $m \in \operatorname{Ref}_{a}(E)$. The smooth curve $t \mapsto e^{tf} m e^{-tf}$ lies entirely in the linear subspace $\operatorname{Ref}_{a}(E)$. Since $\operatorname{Ref}_{a}(E)$ a finite-dimensional subspace, Lemma \ref{3} yields
\[
\left.\frac{d}{dt}\right|_{t=0} e^{tf} m e^{-tf} = [f, m] \in \operatorname{Ref}_{a}(E).
\]
As this holds for every $m \in \operatorname{Ref}_{a}(E)$, we conclude $[f, \operatorname{Ref}_{a}(E)] \subseteq \operatorname{Ref}_{a}(E)$.
\end{proof}

\subsection{Applications to local derivations}

\begin{thm}\label{liealgebra}
If $E \subseteq \operatorname{End}(V)$ is a Lie subalgebra, then $\operatorname{Ref}_{a}(E)$ is a Lie subalgebra of $\operatorname{End}(V)$.
\end{thm}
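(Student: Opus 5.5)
The plan is to combine Lemma \ref{4} with a pointwise correction argument. Write $F:=\operatorname{Ref}_{a}(E)$. Since $E$ is a Lie subalgebra, every $g\in E$ satisfies $[g,E]\subseteq E$, so $E\subseteq N(E)$. Lemma \ref{4} then gives $E\subseteq N(F)$, that is,
\[
[E,F]\subseteq F .
\]
This is the one nontrivial input. The remaining work is to upgrade it to $[F,F]\subseteq F$ by evaluating at one vector at a time and replacing elements of $F$ by elements of $E$ that agree with them at that vector.

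Fix $m_1,m_2\in F$ and $v\in V$. We must show $[m_1,m_2](v)\in E(v)$; note that $E(v)$ is a linear subspace because $E$ is. Since $m_1,m_2\in F$, choose $g,h\in E$ with $g(v)=m_1(v)$ and $h(v)=m_2(v)$. Split
\[
[m_1,m_2]=[m_1-g,m_2]+[g,m_2].
\]
The second term lies in $[E,F]\subseteq F$, so its value at $v$ lies in $E(v)$. For the first term, $(m_1-g)(v)=0$, hence
\[
[m_1-g,m_2](v)=(m_1-g)(m_2v)=(m_1-g)(hv).
\]

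Next we rewrite $(m_1-g)(hv)$ as a commutator evaluated at $v$. Again using $(m_1-g)(v)=0$,
\[
(m_1-g)(hv)=[m_1-g,h](v)+h\bigl((m_1-g)v\bigr)=[m_1,h](v)-[g,h](v).
\]
Here $[m_1,h]=-[h,m_1]\in[E,F]\subseteq F$, so $[m_1,h](v)\in E(v)$. Also $[g,h]\in E$ because $E$ is a Lie subalgebra, so $[g,h](v)\in E(v)$. Summing these contributions shows $[m_1,m_2](v)\in E(v)$. Since $v$ was arbitrary, $[m_1,m_2]\in F$.

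Finally, $F$ is a linear subspace of $\operatorname{End}(V)$, because each $E(v)$ is a subspace. Together with closure under the bracket, this shows $F$ is a Lie subalgebra.

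The main obstacle was that Lemma \ref{4} only controls brackets with elements of $E$, not with arbitrary elements of $F$. This is overcome by substituting $g$ and $h$ from $E$ that agree with $m_1$ and $m_2$ at the fixed vector $v$, and by tracking the error terms through the vanishing of $(m_1-g)(v)$.
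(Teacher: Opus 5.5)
Your proof is correct and is essentially the paper's argument. Both use Lemma~\ref{4} to get $[E,\operatorname{Ref}_{a}(E)]\subseteq\operatorname{Ref}_{a}(E)$ and then reach the same identity $[m_1,m_2](v)=[g,m_2](v)+[m_1,h](v)-[g,h](v)$; the paper writes this decomposition out directly, while you derive it from the vanishing of $(m_1-g)(v)$.
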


\begin{proof}
Let $f, g \in \operatorname{Ref}_{a}(E)$ and $v \in V$. By Definition \ref{666}, there exist $m, n \in E$ such that $f(v) = m(v)$ and $g(v) = n(v)$. Since $E$ is a Lie subalgebra, it follows that $E \subseteq N(E)$, where $N(E)$ denotes the normalizer of $E$ in $\operatorname{End}(V)$. By Lemma~\ref{4}, we have $E \subseteq N(\operatorname{Ref}_{a}(E))$, which implies $[n, f] \in \operatorname{Ref}_{a}(E)$ and $[m, g] \in \operatorname{Ref}_{a}(E)$. Evaluating these at $v$ yields
\[
[n, f](v) = n(f(v)) - f(n(v)) = n(m(v)) - f(g(v)) \in E(v),
\]
so that $f(g(v)) - n(m(v)) \in E(v)$. Similarly,
\[
[m, g](v) = m(g(v)) - g(m(v)) = m(n(v)) - g(f(v)) \in E(v),
\]
which gives $g(f(v)) - m(n(v)) \in E(v)$. Moreover, since $m, n \in E$ and $E$ is closed under the Lie bracket, $[m, n] \in E$, and thus
\[
[m, n](v) = m(n(v)) - n(m(v)) \in E(v).
\]
We now decompose the commutator $[f, g]$ evaluated at $v$ as follows:
\begin{align*}
[f, g](v) &= f(g(v)) - g(f(v)) \\
&= \bigl(f(g(v)) - n(m(v))\bigr) - \bigl(g(f(v)) - m(n(v))\bigr) + \bigl(n(m(v)) - m(n(v))\bigr).
\end{align*}
Each term on the right-hand side belongs to $E(v)$. As $E(v)$ is a linear subspace, their sum $[f, g](v)$ also lies in $E(v)$. Since $v \in V$ is arbitrary, we conclude $[f, g] \in \operatorname{Ref}_{a}(E)$. Hence, $\operatorname{Ref}_{a}(E)$ is closed under the Lie bracket and forms a Lie subalgebra of $\operatorname{End}(V)$. \qedhere
\end{proof}

Since $\mathrm{LocDer}(A) = \operatorname{Ref}_{a}(\mathrm{Der}(A))$ and $\mathrm{Der}(A)$ is a Lie subalgebra of $\mathrm{End}(A)$, the preceding theorem immediately yields the following corollary.

\begin{thm}\label{twoalgebra}
Let $A$ be an algebra. Then $\mathrm{LocDer}(A)$ forms a Lie algebra under the commutator bracket.
\end{thm}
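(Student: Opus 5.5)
The plan is to reduce the statement to Theorem~\ref{liealgebra} with $V=A$ and $E=\mathrm{Der}(A)$. Two facts are needed: that $\mathrm{LocDer}(A)$ coincides with $\operatorname{Ref}_{a}(\mathrm{Der}(A))$, and that $\mathrm{Der}(A)$ is a Lie subalgebra of $\operatorname{End}(A)$.

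First I will verify the identification $\mathrm{LocDer}(A)=\operatorname{Ref}_{a}(\mathrm{Der}(A))$ directly from the definitions. A linear map $\Delta$ is a local derivation exactly when, for each $x\in A$, there is some $D_x\in\mathrm{Der}(A)$ with $\Delta(x)=D_x(x)$. This says precisely that $\Delta(x)\in\mathrm{Der}(A)(x)$ for all $x$, which is the membership condition in Definition~\ref{666}.

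Next I will check the standard fact that $\mathrm{Der}(A)$ is closed under the commutator. Take $D_1,D_2\in\mathrm{Der}(A)$ and expand $[D_1,D_2](xy)$ using the Leibniz rule twice. The mixed terms $D_1(x)D_2(y)$ and $D_2(x)D_1(y)$ cancel, leaving
\[
[D_1,D_2](x)\,y+x\,[D_1,D_2](y).
\]
Since $\mathrm{Der}(A)$ is also clearly a linear subspace, it is a Lie subalgebra of $\operatorname{End}(A)$. No associativity or other identity on $A$ is used, so this holds for an arbitrary bilinear multiplication.

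Finally, Theorem~\ref{liealgebra} gives that $\operatorname{Ref}_{a}(\mathrm{Der}(A))$ is a Lie subalgebra of $\operatorname{End}(A)$. By the identification above, $\mathrm{LocDer}(A)$ is closed under the commutator bracket and hence is a Lie algebra. There is no real obstacle here: all the substance lies in Theorem~\ref{liealgebra} and Lemma~\ref{4}. The only point needing care is that the closure of $\mathrm{Der}(A)$ under the bracket holds for arbitrary, not necessarily associative or Lie, algebras, and the computation above confirms that it does.
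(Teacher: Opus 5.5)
Your proposal is correct and matches the paper's argument: the paper also derives this as an immediate corollary of Theorem~\ref{liealgebra}, using $\mathrm{LocDer}(A)=\operatorname{Ref}_{a}(\mathrm{Der}(A))$ and the fact that $\mathrm{Der}(A)$ is a Lie subalgebra of $\operatorname{End}(A)$. Your explicit checks of these two facts, including that the Leibniz-rule computation uses no identity on the multiplication, just spell out what the paper treats as standard.
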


\begin{rem}
The argument leading to Theorem~\ref{twoalgebra} is not restricted to ordinary derivations. More generally, let $E\subseteq\operatorname{End}(V)$ be the space of all operators of a given derivation type. Its local version is
\[
\left\{
f\in\operatorname{End}(V)\ \middle|\
\begin{array}{l}
\text{for every $v\in V$, there exists $g_v\in E$, depending on $v$,}\\
\text{such that $f(v)=g_v(v)$}
\end{array}
\right\}
=
\operatorname{Ref}_{a}(E).
\]
Therefore, whenever $E$ is a Lie subalgebra of $\operatorname{End}(V)$ under the commutator bracket, Theorem~\ref{liealgebra} implies that its local version is also a Lie subalgebra of $\operatorname{End}(V)$.

This applies to the spaces of generalized derivations and quasiderivations of Lie algebras studied by Leger and Luks~\cite{LegerLuks2000}, prederivations of Lie algebras studied by Burde~\cite{Burde2002}, $N$-derivations of Lie algebras for each fixed $N\geq2$ studied by Lian and Chen~\cite{LianChen2016}, and $f$-Leibniz derivations for each fixed order and bracket arrangement discussed by Kaygorodov~\cite{Ivan2024}, among others.

For the definition of $\delta$-derivations, see \cite{Ivan2024}. Writing $\operatorname{Der}_{\delta}(A)$ for the space of $\delta$-derivations of $A$, we have
\[
[\operatorname{Der}_{\delta_1}(A),
\operatorname{Der}_{\delta_2}(A)]
\subseteq
\operatorname{Der}_{\delta_1\delta_2}(A).
\]
Thus, for a fixed $\delta$, the space $\operatorname{Der}_{\delta}(A)$ need not be a Lie algebra. However,
\[
\operatorname{Der}(A)+\operatorname{Der}_{-1}(A)
\qquad\text{and}\qquad
\sum_{\delta\in\mathbb C}\operatorname{Der}_{\delta}(A)
\]
are Lie subalgebras of $\operatorname{End}(A)$, where $\operatorname{Der}_{-1}(A)$ is the space of antiderivations. Therefore, Theorem~\ref{liealgebra} also applies to their local versions.

Recall that $\operatorname{Der}_{-1}(A)$ is a Lie triple system in $\operatorname{End}(A)$ under the triple product \cite{Hopkins1996}
\[
[D_1,D_2,D_3]=[[D_1,D_2],D_3].
\]
However, its local version need not be a Lie triple system.

Indeed, let $L$ be the four-dimensional Lie algebra with basis $\{e_1,e_2,e_3,e_4\}$ and nonzero brackets
\[
[e_4,e_1]=e_1,\qquad
[e_4,e_2]=-e_2,\qquad
[e_4,e_3]=e_3.
\]
A direct calculation gives
\[
\operatorname{Der}_{-1}(L)
=
\left\{
\begin{pmatrix}
t&a&0&p\\
b&t&c&q\\
0&d&t&r\\
0&0&0&-2t
\end{pmatrix}
\ \middle|\
t,a,b,c,d,p,q,r\in\mathbb C
\right\},
\]
and
\[
\operatorname{LocDer}_{-1}(L)
=
\left\{
\begin{pmatrix}
s&a&0&p\\
b&u&c&q\\
0&d&s&r\\
0&0&0&v
\end{pmatrix}
\ \middle|\
s,u,a,b,c,d,p,q,r,v\in\mathbb C
\right\}.
\]
Let $E_{ij}$ denote the standard matrix unit determined by $E_{ij}(e_j)=e_i$. Then
\[
E_{12},E_{22},E_{21}\in\operatorname{LocDer}_{-1}(L),
\]
but
\[
\begin{aligned}
[[E_{12},E_{22}],E_{21}]
&=[E_{12},E_{21}]\\
&=E_{11}-E_{22}
\notin\operatorname{LocDer}_{-1}(L),
\end{aligned}
\]
Consequently,
\[
[[
\operatorname{LocDer}_{-1}(L),\operatorname{LocDer}_{-1}(L)],
\operatorname{LocDer}_{-1}(L)]
\nsubseteq
\operatorname{LocDer}_{-1}(L).
\]
Thus, even for a four-dimensional Lie algebra, local antiderivations need not form a Lie triple system. Hence, the conclusion of Theorem~\ref{liealgebra} does not remain valid if a Lie subalgebra is replaced by a Lie triple subsystem.
\end{rem}

Theorem \ref{twoalgebra} remains valid for algebras endowed with several multilinear operations, even when these operations have different arities.

We first recall the notions of derivations and local derivations for algebras with several multilinear operations.

\begin{defn}
Let $A$ be a vector space equipped with a family of multilinear operations
\[
\mu_{\lambda}\colon A^{n_{\lambda}}\to A,
\qquad \lambda\in\Lambda,
\]
where the integers $n_{\lambda}$ may be different. A linear map $D\in\operatorname{End}(A)$ is called a \emph{derivation} of $A$ if
\[
D\bigl(\mu_{\lambda}(x_1,\ldots,x_{n_{\lambda}})\bigr)
=
\sum_{i=1}^{n_{\lambda}}
\mu_{\lambda}
(x_1,\ldots,D(x_i),\ldots,x_{n_{\lambda}})
\]
for every $\lambda\in\Lambda$ and all $x_1,\ldots,x_{n_{\lambda}}\in A$. The set of all derivations of $A$ is denoted by $\operatorname{Der}(A)$.

A linear map $\Delta\in\operatorname{End}(A)$ is called a \emph{local derivation} of $A$ if for every $x\in A$, there exists $D_x\in\operatorname{Der}(A)$, depending on $x$, such that
\[
\Delta(x)=D_x(x).
\]
The set of all local derivations of $A$ is denoted by $\operatorname{LocDer}(A)$.
\end{defn}

It is well known that $\operatorname{Der}(A)$ is a Lie subalgebra of $\operatorname{End}(A)$ under the commutator bracket. Moreover, by definition,
\[
\operatorname{LocDer}(A)
=
\operatorname{Ref}_{a}(\operatorname{Der}(A)).
\]
Hence the Theorem \ref{liealgebra} immediately yields the following result.

\begin{thm}
Let $A$ be a vector space equipped with an arbitrary family of multilinear operations. Then $\operatorname{LocDer}(A)$ forms a Lie algebra under the commutator bracket.
\end{thm}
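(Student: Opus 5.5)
The plan is to reduce the statement to Theorem~\ref{liealgebra}. Two facts are needed: that $\operatorname{Der}(A)$ is a Lie subalgebra of $\operatorname{End}(A)$ for an arbitrary family of multilinear operations $\mu_\lambda$, and that $\operatorname{LocDer}(A)=\operatorname{Ref}_{a}(\operatorname{Der}(A))$.

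The second fact is immediate from Definition~\ref{666}. A linear map $\Delta$ lies in $\operatorname{LocDer}(A)$ exactly when $\Delta(x)\in\{D(x)\mid D\in\operatorname{Der}(A)\}=\operatorname{Der}(A)(x)$ for every $x\in A$.

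The first fact is where the only (routine) computation lies. Since $A$ is finite-dimensional and the defining identity is linear in $D$, $\operatorname{Der}(A)$ is a linear subspace of $\operatorname{End}(A)$. Fix $\lambda$ and $D_1,D_2\in\operatorname{Der}(A)$, and write $\mu=\mu_\lambda$, $n=n_\lambda$. Applying the derivation identity twice gives
\[
D_1D_2\,\mu(x_1,\ldots,x_n)=\sum_{i}\mu(x_1,\ldots,D_1D_2x_i,\ldots,x_n)+\sum_{i\neq j}\mu(\ldots,D_1x_j,\ldots,D_2x_i,\ldots),
\]
where in the second sum $D_1$ acts in slot $j$ and $D_2$ in slot $i$. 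Swapping $D_1$ and $D_2$ and subtracting, the mixed sum over $i\neq j$ is symmetric under exchanging the roles of $(D_1,j)$ and $(D_2,i)$, so it cancels. What remains is
\[
[D_1,D_2]\,\mu(x_1,\ldots,x_n)=\sum_i\mu(x_1,\ldots,[D_1,D_2]x_i,\ldots,x_n),
\]
so $[D_1,D_2]\in\operatorname{Der}(A)$. Since this holds for each $\lambda$ separately, the differing arities and the possibly infinite index set $\Lambda$ cause no trouble: the conditions are imposed independently for each operation.

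With $E=\operatorname{Der}(A)$ a Lie subalgebra of $\operatorname{End}(A)$, Theorem~\ref{liealgebra} shows that $\operatorname{Ref}_{a}(E)=\operatorname{LocDer}(A)$ is a Lie subalgebra of $\operatorname{End}(A)$, and hence a Lie algebra under the commutator bracket. The one step needing care is the cancellation of the mixed terms. It can be checked by writing $D_1D_2$ applied to $\mu(x_1,\ldots,x_n)$ as a sum over ordered pairs of slots, including the diagonal pairs, and noting that only the diagonal pairs survive the antisymmetrization.
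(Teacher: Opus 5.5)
Your proposal is correct and matches the paper's argument: you identify $\operatorname{LocDer}(A)=\operatorname{Ref}_{a}(\operatorname{Der}(A))$ and apply Theorem~\ref{liealgebra} to the Lie subalgebra $\operatorname{Der}(A)$. The only difference is that you verify closure of $\operatorname{Der}(A)$ under commutators via the cancellation of mixed terms, which the paper simply cites as well known.
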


\subsection{Applications to almost inner derivations}

\begin{thm}\label{ideal}
Let $E$ and $F$ be linear subspaces of $\operatorname{End}(V)$. If $F$ is a Lie subalgebra and $[F, E] \subseteq E$, then $F \cap \operatorname{Ref}_{a}(E)$ is a Lie ideal of $F$.
\end{thm}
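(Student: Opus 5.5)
The plan is to reduce everything to Lemma~\ref{4}. Since $E$ is only a linear subspace, not a Lie subalgebra, the argument of Theorem~\ref{liealgebra} does not apply directly. It is not needed, however: for an ideal we only have to bracket elements of $F\cap\operatorname{Ref}_{a}(E)$ with elements of $F$, and $F$ normalizes $E$ by hypothesis.

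\textbf{Step 1 (subspace).} First note that $F\cap\operatorname{Ref}_{a}(E)$ is a linear subspace of $F$. Indeed, $\operatorname{Ref}_{a}(E)$ is a subspace because each $E(v)$ is a subspace of $V$, as $E$ is linear.

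\textbf{Step 2 (normalizer).} Fix $x\in F$ and $f\in F\cap\operatorname{Ref}_{a}(E)$. The hypothesis $[F,E]\subseteq E$ says exactly that $F\subseteq N(E)$. By Lemma~\ref{4}, $x\in N(\operatorname{Ref}_{a}(E))$, so
\[
[x,f]\in\operatorname{Ref}_{a}(E).
\]

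\textbf{Step 3 (bracket stays in $F$).} Since $F$ is a Lie subalgebra and $x,f\in F$, we have $[x,f]\in F$. Combining this with Step 2 gives $[x,f]\in F\cap\operatorname{Ref}_{a}(E)$, so $[F,\,F\cap\operatorname{Ref}_{a}(E)]\subseteq F\cap\operatorname{Ref}_{a}(E)$. Hence $F\cap\operatorname{Ref}_{a}(E)$ is a Lie ideal of $F$; in particular it is also a Lie subalgebra.

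The only nontrivial ingredient is Lemma~\ref{4}. It uses the finite-dimensionality of $\operatorname{Ref}_{a}(E)$: the curve $t\mapsto e^{tx} f e^{-tx}$ stays in this subspace, so its derivative at $t=0$ does too. That lemma is already established, so I expect no real obstacle beyond checking that $[F,E]\subseteq E$ is precisely the normalizer condition. As an application, take $F=\operatorname{Der}(L)$ and $E=\operatorname{Inn}(L)$, using $[D,\operatorname{ad}_y]=\operatorname{ad}_{D(y)}$. This yields that $\operatorname{AID}(L)$ is an ideal of $\operatorname{Der}(L)$.
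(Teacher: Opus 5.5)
Your proposal is correct and follows the paper's proof: it rewrites $[F,E]\subseteq E$ as $F\subseteq N(E)$, applies Lemma~\ref{4} to get $[F,\operatorname{Ref}_{a}(E)]\subseteq\operatorname{Ref}_{a}(E)$, and uses closure of $F$ under the bracket to land in $F\cap\operatorname{Ref}_{a}(E)$. Your Step~1, checking that the intersection is a linear subspace, is a harmless detail that the paper leaves implicit.
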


\begin{proof}
The hypothesis $[F, E] \subseteq E$ means $F \subseteq N(E)$. By Lemma~\ref{4}, this implies $F \subseteq N(\operatorname{Ref}_{a}(E))$, i.e.,
\[
[F, \operatorname{Ref}_{a}(E)] \subseteq \operatorname{Ref}_{a}(E).
\]
Since $F \cap \operatorname{Ref}_{a}(E) \subseteq \operatorname{Ref}_{a}(E)$, we obtain
\[
[F, F \cap \operatorname{Ref}_{a}(E)] \subseteq [F, \operatorname{Ref}_{a}(E)] \subseteq \operatorname{Ref}_{a}(E).
\]
Furthermore, because $F$ is a Lie subalgebra, the commutator of any element in $F$ with an element in $F \cap \operatorname{Ref}_{a}(E)$ necessarily lies in $F$. Therefore,
\[
[F, F \cap \operatorname{Ref}_{a}(E)] \subseteq F \cap \operatorname{Ref}_{a}(E),
\]
which establishes that $F \cap \operatorname{Ref}_{a}(E)$ is a Lie ideal of $F$. \qedhere
\end{proof}

Let $L$ be a Lie algebra. Since $\mathrm{AID}(L) = \mathrm{Der}(L) \cap \operatorname{Ref}_{a}(\mathrm{Inn}(L))$, $\mathrm{Der}(L)$ is a Lie subalgebra, and $[\mathrm{Der}(L), \mathrm{Inn}(L)] \subseteq \mathrm{Inn}(L)$, the preceding theorem immediately yields the following corollary.

\begin{thm}
Let $L$ be a Lie algebra. Then $\mathrm{AID}(L)$ is a Lie ideal of $\mathrm{Der}(L)$.
\end{thm}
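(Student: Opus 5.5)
This follows directly from Theorem~\ref{ideal}, applied with $E=\operatorname{Inn}(L)$ and $F=\operatorname{Der}(L)$, both viewed as linear subspaces of $\operatorname{End}(L)$. The plan is to check the two hypotheses of Theorem~\ref{ideal} and then identify the resulting intersection with $\operatorname{AID}(L)$ using the description in the Remark following Definition~\ref{666}.

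First, $F=\operatorname{Der}(L)$ is a Lie subalgebra of $\operatorname{End}(L)$. If $D_1,D_2$ are derivations, expanding $[D_1,D_2]([x,y])$ with the Leibniz rule makes the mixed terms $[D_1x,D_2y]$ and $[D_2x,D_1y]$ cancel, so $[D_1,D_2]$ is again a derivation. Second, $[F,E]\subseteq E$ follows from the identity
\[
[D,\operatorname{ad}_x]=\operatorname{ad}_{D(x)},
\]
valid for every $D\in\operatorname{Der}(L)$ and $x\in L$. To verify it, evaluate both sides at $y$: the left side is $D[x,y]-[x,Dy]$, which equals $[Dx,y]$ by the derivation property. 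Since $\operatorname{Inn}(L)=\{\operatorname{ad}_x\mid x\in L\}$ is a linear subspace, this gives $[\operatorname{Der}(L),\operatorname{Inn}(L)]\subseteq\operatorname{Inn}(L)$.

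Theorem~\ref{ideal} then says that $\operatorname{Der}(L)\cap\operatorname{Ref}_{a}(\operatorname{Inn}(L))$ is a Lie ideal of $\operatorname{Der}(L)$. It remains to confirm that this intersection is $\operatorname{AID}(L)$. A derivation $D$ lies in $\operatorname{Ref}_{a}(\operatorname{Inn}(L))$ exactly when $D(x)\in\operatorname{Inn}(L)(x)=\{[y,x]\mid y\in L\}=[L,x]$ for every $x\in L$, which is precisely the definition of an almost inner derivation. This proves the statement.

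No step is genuinely hard, since the substantive work was already done in Lemma~\ref{4}. The only point needing care is the identity $[D,\operatorname{ad}_x]=\operatorname{ad}_{D(x)}$, and in particular getting the sign conventions right in $\operatorname{Inn}(L)(x)=[L,x]$.
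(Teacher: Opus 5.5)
Your proposal is correct and matches the paper's argument: you apply Theorem~\ref{ideal} with $F=\operatorname{Der}(L)$ and $E=\operatorname{Inn}(L)$, and then identify $\operatorname{Der}(L)\cap\operatorname{Ref}_{a}(\operatorname{Inn}(L))$ with $\operatorname{AID}(L)$. The only difference is that you spell out the routine checks the paper leaves implicit, namely that $\operatorname{Der}(L)$ is closed under brackets, that $[D,\operatorname{ad}_x]=\operatorname{ad}_{D(x)}$, and that $\operatorname{Inn}(L)(x)=[L,x]$.
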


The preceding result extends to arbitrary algebras by using Schafer's notion of inner derivations~\cite{Schafer1949}.

\begin{defn}\label{def:Schafer-AID}
Let $A$ be an algebra. For $x\in A$, define
\[
L_x(y)=xy,
\qquad
R_x(y)=yx
\qquad (y\in A),
\]
and let
\[
\mathfrak L(A)
=
\operatorname{Lie}\langle L_x,R_x\mid x\in A\rangle
\subseteq\operatorname{End}(A)
\]
be the Lie transformation algebra of $A$. The inner derivations of $A$ in the sense of Schafer are
\[
\mathrm{InnDer}(A)
=
\mathrm{Der}(A)\cap\mathfrak L(A).
\]
A derivation $\Delta\in\mathrm{Der}(A)$ is called an \emph{almost inner derivation} if, for every $x\in A$, there exists $D_x\in\mathrm{InnDer}(A)$, depending on $x$, such that
\[
\Delta(x)=D_x(x).
\]
The set of all almost inner derivations of $A$ is denoted by $\mathrm{AID}(A)$.
\end{defn}

Schafer introduced the above notion of inner derivations and proved that $\mathrm{InnDer}(A)$ is a Lie ideal of $\mathrm{Der}(A)$~\cite{Schafer1949}. Moreover, Definition~\ref{666} gives
\[
\mathrm{AID}(A)
=
\mathrm{Der}(A)\cap
\operatorname{Ref}_{a}\bigl(\mathrm{InnDer}(A)\bigr).
\]
Therefore, Theorem~\ref{ideal}, applied with $F=\mathrm{Der}(A)$ and $E=\mathrm{InnDer}(A)$, gives the following result.

\begin{thm}\label{thm:AID-general-algebra}
Let $A$ be an algebra. Then $\mathrm{AID}(A)$ is a Lie ideal of $\mathrm{Der}(A)$.
\end{thm}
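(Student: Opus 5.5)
The plan is to apply Theorem~\ref{ideal} with $F=\mathrm{Der}(A)$ and $E=\mathrm{InnDer}(A)$, exactly as the preceding paragraph indicates.

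\emph{Hypotheses of Theorem~\ref{ideal}.} Both $F$ and $E$ are linear subspaces of $\operatorname{End}(A)$. The space $F=\mathrm{Der}(A)$ is a Lie subalgebra, by the standard check that the commutator of two derivations is a derivation. The condition $[F,E]\subseteq E$ is Schafer's result that $\mathrm{InnDer}(A)$ is a Lie ideal of $\mathrm{Der}(A)$.

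If one wants that fact to be self-contained, I would verify it directly. For $D\in\mathrm{Der}(A)$ and $x\in A$, applying the Leibniz rule to $D(xy)$ gives
\[
[D,L_x]=L_{D(x)},\qquad [D,R_x]=R_{D(x)}.
\]
So $\operatorname{ad}_D$ maps the generating set $\{L_x,R_x\}$ into itself. Since $\operatorname{ad}_D$ is a derivation of the commutator bracket on $\operatorname{End}(A)$, it preserves the Lie algebra $\mathfrak L(A)$ generated by that set; this follows by induction on bracket length. Hence $[D,\mathfrak L(A)]\subseteq\mathfrak L(A)$. Also $[D,\mathrm{Der}(A)]\subseteq\mathrm{Der}(A)$, so
\[
[D,\mathrm{InnDer}(A)]\subseteq \mathrm{Der}(A)\cap\mathfrak L(A)=\mathrm{InnDer}(A).
\]

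\emph{Identifying $\mathrm{AID}(A)$ with $F\cap\operatorname{Ref}_{a}(E)$.} By Definition~\ref{def:Schafer-AID} and Definition~\ref{666},
\[
\mathrm{AID}(A)=\mathrm{Der}(A)\cap\operatorname{Ref}_{a}(\mathrm{InnDer}(A))=F\cap\operatorname{Ref}_{a}(E).
\]
Theorem~\ref{ideal} then says this intersection is a Lie ideal of $\mathrm{Der}(A)$, which is the claim.

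The only step needing care is the invariance $[D,\mathfrak L(A)]\subseteq\mathfrak L(A)$, and the induction on brackets above handles it.
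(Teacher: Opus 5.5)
Your proposal is correct and matches the paper's proof: it applies Theorem~\ref{ideal} with $F=\mathrm{Der}(A)$ and $E=\mathrm{InnDer}(A)$, using $\mathrm{AID}(A)=\mathrm{Der}(A)\cap\operatorname{Ref}_{a}(\mathrm{InnDer}(A))$. The paper cites Schafer for $[\mathrm{Der}(A),\mathrm{InnDer}(A)]\subseteq\mathrm{InnDer}(A)$, and your direct check via $[D,L_x]=L_{D(x)}$ and $[D,R_x]=R_{D(x)}$ is a correct self-contained substitute for that citation.
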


\subsection{Applications to a study of invertible mappings}

\begin{lem}\label{lem:PIS-id-invertible}
Let \(V\) be a vector space, and let \(E\subseteq \operatorname{End}(V)\) be a linear subspace. If $\operatorname{id}_V\in \operatorname{Ref}_{a}(E),$ then \(E\) contains an invertible operator.
\end{lem}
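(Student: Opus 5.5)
The plan is to argue by contradiction using an element of $E$ of maximal rank, together with the hypothesis $\operatorname{id}_V\in\operatorname{Ref}_{a}(E)$. That hypothesis says exactly that for every $v\in V$ there is some $h_v\in E$ with $h_v(v)=v$. Suppose every element of $E$ is singular, and let $r<n=\dim V$ be the maximal rank attained on $E$. Fix $g\in E$ with $\operatorname{rank} g=r$. Since $E$ is a linear subspace, $\operatorname{rank}(g+th)\le r$ for all $h\in E$ and all $t\in\mathbb C$.

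\emph{Step 1 (rank-perturbation identities).} Choose bases of $V$ for the domain and for the codomain in which $g=\begin{pmatrix} I_r&0\\0&0\end{pmatrix}$. Write an arbitrary $h\in E$ in the same bases as $\begin{pmatrix} h_{11}&h_{12}\\ h_{21}&h_{22}\end{pmatrix}$. For small $t$ the block $I_r+th_{11}$ is invertible, so $\operatorname{rank}(g+th)=r+\operatorname{rank}\bigl(th_{22}-t^2h_{21}(I_r+th_{11})^{-1}h_{12}\bigr)$. This Schur complement must therefore vanish identically in $t$. Expanding in powers of $t$ gives $h_{22}=0$, $h_{21}h_{12}=0$, and more generally $h_{21}h_{11}^k h_{12}=0$ for all $k\ge 0$. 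Intrinsically, the first identity says $h(\ker g)\subseteq\operatorname{im} g$ for all $h\in E$. Polarizing in $h$ (replacing $h$ by $h+h'$) also gives $h'_{21}h_{12}+h_{21}h'_{12}=0$ for $h,h'\in E$.

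\emph{Step 2 (using $\operatorname{id}_V\in\operatorname{Ref}_{a}(E)$).} Take $0\neq v\in\ker g$ and $h\in E$ with $h(v)=v$. By Step~1, $v=h(v)\in\operatorname{im} g$, so $\ker g\subseteq\operatorname{im} g$. This is not yet a contradiction, since nilpotent-looking configurations allow it. So I will iterate. Consider $g$ and $g+sh$ for the various $h=h_v$, and use that $\operatorname{rank}(g+sh)=r$ for generic $s$, so these are again maximal-rank elements. Applying Step~1 to $g+sh$ gives $E(\ker(g+sh))\subseteq\operatorname{im}(g+sh)$. I then aim to produce a vector $w$ lying in $\ker(g+sh)$ but not in $\operatorname{im}(g+sh)$, which would contradict $w\in E(w)$. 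The natural candidate comes from the chain $v\in\ker g$, $v=h(v)$: the vector $v$ satisfies $(g+sh)(v)=sv$, and I would use the second-order identities $h_{21}h_{11}^kh_{12}=0$ to show that this forces $h_{12}$ (the part of $h$ mapping $\ker g$ into $\operatorname{im} g$, suitably identified) to act nilpotently. A nilpotent map cannot have $v$ as a fixed vector, which contradicts $h(v)=v$.

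\emph{Main obstacle.} The delicate point is Step~2. In Step~1 the domain and codomain bases differ, so the condition $h(v)=v$ does not translate directly into a block statement. I would handle this by working with the honest operator $g$ rather than a normal form. Replace $\ker g$ and $\operatorname{im} g$ by the generalized-eigenspace (Fitting) decomposition $V=V_0\oplus V_1$ of $g$, where $g$ is nilpotent on $V_0\neq 0$ and invertible on $V_1$. I would then redo the Schur-complement computation with respect to this $g$-invariant splitting, aiming to show that every $h\in E$ compresses to a nilpotent operator on $V_0$ modulo the terms killed by the perturbation identities. Taking $v\in\ker g\subseteq V_0$ with $h(v)=v$ would then give the contradiction.

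If this route stalls, the fallback is a dimension/genericity argument: the condition ``$v\in E(v)$'' is Zariski-closed in $v$. One would then work at a generic vector over $\mathbb C(x_1,\dots,x_n)$ and combine it with the vanishing of the polynomial $\det\bigl(\sum_i t_ig_i\bigr)$, where $g_1,\dots,g_k$ is a basis of $E$.
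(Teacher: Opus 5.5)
\textbf{There is a genuine gap in Step 2.} Your Step 1 is correct, and so is the inclusion $\ker g\subseteq\operatorname{im} g$ in Step 2. But the proposal stops exactly where the proof has to happen: no contradiction is derived, and the mechanism you sketch cannot produce one.

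The identities $h_{22}=0$ and $h_{21}h_{11}^kh_{12}=0$ do not force $h_{12}$ to be nilpotent. Likewise, for a general maximal-rank $g$ the compressions of elements of $E$ to $V_0$ need not be nilpotent. Take $V=\mathbb C^2$ and $E=\operatorname{span}\{E_{12},E_{11}\}$.
\begin{itemize}
\item Every element has zero second row, so $g=E_{12}$ has maximal rank $1$, $\ker g=\operatorname{im} g=\mathbb C e_1$, and $V_0=V$.
\item The element $h=E_{11}\in E$ fixes every $v\in\ker g$, and $(g+sh)v=sv$.
\item In your bases (domain $e_2,e_1$, codomain $e_1,e_2$) one gets $h_{11}=h_{21}=h_{22}=0$ and $h_{12}=1$. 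This is the identity under $\ker g=\operatorname{im} g$, and its compression to $V_0$ is $E_{11}$, which is not nilpotent.
\end{itemize}
Every fact you have established holds here, yet there is no contradiction. The hypothesis fails only at $e_2\notin\ker g$, since $E(e_2)=\mathbb C e_1$. So the contradiction cannot be extracted from $g$ and the $h_v$ with $v\in\ker g$. Your ``iteration'' over $g+sh$ needs a quantity that strictly decreases, and the proposal supplies none. The fallback over $\mathbb C(x_1,\dots,x_n)$ is not yet an argument.

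\textbf{The missing idea is the paper's choice of $A$.} Among maximal-rank elements, choose $A$ minimizing the algebraic multiplicity $a$ of the eigenvalue $0$, i.e.\ minimizing $\dim V_0$. Your identity $(g+sh)v=sv$ is exactly what is then needed.
\begin{itemize}
\item Take $0\neq x\in\ker A$ and $B\in E$ with $Bx=x$. Then $\chi_{A+tB}$ vanishes at $\lambda=t$, so $\chi_{A+tB}(\lambda)=(\lambda-t)q(t,\lambda)$ with $q$ polynomial in $(t,\lambda)$.
\item At $t=0$ this gives $q(0,\lambda)=\lambda^{a-1}p(\lambda)$ with $p(0)\neq0$.
\item Hence, for all but finitely many $t$, $A+tB\in E$ still has rank $r$, but the multiplicity of $0$ is strictly less than $a$. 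This is a contradiction.
\end{itemize}
The paper uses this to show $a=\dim\ker A$, so $0$ is a semisimple eigenvalue and $V=\operatorname{im}A\oplus\ker A$. It then concludes by induction on $\dim V$ via the compressions $\pi C|_{\ker A}$. In your setup, $\ker A\cap\operatorname{im}A=0$ together with your inclusion $\ker A\subseteq\operatorname{im}A$ (and $\ker A\neq0$) would finish at once.

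In fact the multiplicity computation uses only $x\in\ker A$. So choosing $A$ to minimize $a$ over all of $E$ already forces $a=0$, with no rank condition or Schur-complement analysis needed.
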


\begin{proof}
We argue by induction on \(n=\dim V\). The case \(n=1\) is clear. Assume \(n>1\), and suppose that every element of \(E\) is singular. Choose \(A\in E\) of maximal rank \(r\), and among such elements choose \(A\) so that the algebraic multiplicity \(a\) of \(0\) as an eigenvalue of \(A\) is minimal. Since \(\operatorname{id}_V\in\operatorname{Ref}_{a}(E)\), we have \(0<r<n\). Put \(m=n-r=\dim\ker A\).

We claim that \(a=m\). Otherwise \(a>m\), and hence \(\ker A\cap\operatorname{Im}A\neq0\). Choose \(0\neq x\in\ker A\cap\operatorname{Im}A\). Since \(\operatorname{id}_V\in\operatorname{Ref}_{a}(E)\), there exists \(B\in E\) such that \(Bx=x\). For \(t\in\mathbb F\), set \(A_t=A+tB\). Then \(A_tx=tx\). By maximality of \(r\), \(\operatorname{rank}A_t\le r\) for all \(t\), while \(\operatorname{rank}A_t=r\) for all but finitely many \(t\). Moreover, the characteristic polynomial of \(A_t\) is divisible by \(\lambda-t\). Writing
\[
\chi_{A_t}(\lambda)=(\lambda-t)q_t(\lambda),
\]
we have at \(t=0\)
\[
q_0(\lambda)=\lambda^{a-1}h(\lambda),\qquad h(0)\neq0.
\]
Thus, for all but finitely many \(t\), the multiplicity of \(0\) as a root of \(q_t\), and hence of \(\chi_{A_t}\) for \(t\neq0\), is strictly smaller than \(a\). Choosing such a nonzero \(t\) with \(\operatorname{rank}A_t=r\) contradicts the minimality of \(a\). Therefore \(a=m\).

Hence \(0\) is a semisimple eigenvalue of \(A\), so
\[
V=\operatorname{Im}A\oplus\ker A.
\]
Let \(K=\ker A\), and let \(\pi:V\to K\) be the projection along \(\operatorname{Im}A\). Define
\[
E_K=\{\pi C|_K\mid C\in E\}\subseteq\operatorname{End}(K).
\]
For every \(y\in K\), there exists \(C\in E\) such that \(Cy=y\), whence
\[
(\pi C|_K)(y)=y.
\]
Thus \(\operatorname{id}_K\in\operatorname{Ref}_{a}(E_K)\).

We show that \(E_K\) contains no invertible operator. If \(\pi B|_K\) were invertible for some \(B\in E\), then, with respect to the decomposition \(V=\operatorname{Im}A\oplus K\), the polynomial \(\det(A+tB)\) would have nonzero coefficient
\[
\det(A|_{\operatorname{Im}A})\det(\pi B|_K)
\]
at \(t^{\dim K}\). Hence \(\det(A+tB)\) would not be the zero polynomial, and since \(\mathbb F\) is infinite, \(A+tB\) would be invertible for some \(t\), a contradiction.

Thus \(E_K\) has no invertible operator. But \(\dim K<n\) and \(\operatorname{id}_K\in\operatorname{Ref}_{a}(E_K)\), contradicting the induction hypothesis. Hence \(E\) contains an invertible operator.
\end{proof}

\begin{thm}\label{thm:PIS-invertible}
Let \(V\) be a vector space, and let \(E\subseteq \operatorname{End}(V)\) be a linear subspace. If
\[
\operatorname{Ref}_{a}(E)\cap\operatorname{GL}(V)\neq\varnothing,
\]
then
\[
E\cap\operatorname{GL}(V)\neq\varnothing.
\]
\end{thm}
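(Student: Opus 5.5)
The plan is to reduce to Lemma~\ref{lem:PIS-id-invertible} by composing with the inverse of an invertible element of the hull. Suppose $f\in\operatorname{Ref}_{a}(E)\cap\operatorname{GL}(V)$. I will consider the linear subspace
\[
E' := f^{-1}E = \{f^{-1}g \mid g\in E\}\subseteq\operatorname{End}(V).
\]
The first step is to check that $\operatorname{id}_V\in\operatorname{Ref}_{a}(E')$. Fix $v\in V$. Since $f\in\operatorname{Ref}_{a}(E)$, there is some $g_v\in E$ with $f(v)=g_v(v)$. Then
\[
(f^{-1}g_v)(v)=f^{-1}(g_v(v))=f^{-1}(f(v))=v,
\]
so $v\in E'(v)$ for every $v$. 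This means exactly that $\operatorname{id}_V\in\operatorname{Ref}_{a}(E')$. Note that left multiplication by a fixed invertible operator preserves linearity of subspaces, so $E'$ is indeed a linear subspace, as the lemma requires.

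The second step applies Lemma~\ref{lem:PIS-id-invertible} to $E'$. It gives an invertible operator $f^{-1}g\in E'$ with $g\in E$. Since $f$ is invertible, $g=f\,(f^{-1}g)$ is a product of invertible operators, hence invertible. So $g\in E\cap\operatorname{GL}(V)$, which is the claim.

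I expect no real obstacle here: all the difficulty sits in Lemma~\ref{lem:PIS-id-invertible}, which the paper already proves. The only point to be careful about is to multiply by $f^{-1}$ on the left rather than the right. The pointwise condition $f(v)=g_v(v)$ is evaluated at the same vector $v$, so left composition keeps the relation pointwise, whereas right composition $gf^{-1}$ would change the vector at which each $g_v$ is evaluated.
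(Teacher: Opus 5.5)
Your argument is correct and is essentially identical to the paper's proof: form $E'=f^{-1}E$, check pointwise that $\operatorname{id}_V\in\operatorname{Ref}_{a}(E')$, apply Lemma~\ref{lem:PIS-id-invertible}, and recover an invertible element of $E$ by multiplying back by $f$. One small correction to your closing remark: right multiplication would work equally well, because for each $w\in V$ the choice $g=g_{f^{-1}(w)}$ gives $(gf^{-1})(w)=w$, so $\operatorname{id}_V\in\operatorname{Ref}_{a}(Ef^{-1})$ also holds.
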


\begin{proof}
Take \(T\in\operatorname{Ref}_{a}(E)\cap\operatorname{GL}(V)\), and set
\[
E'=T^{-1}E=\{T^{-1}A\mid A\in E\}.
\]
For each \(v\in V\), there exists \(A_v\in E\) such that \(Tv=A_vv\). Hence
\[
v=T^{-1}A_vv,
\]
so \(\operatorname{id}_V\in\operatorname{Ref}_{a}(E')\). By Lemma~\ref{lem:PIS-id-invertible}, \(E'\) contains an invertible operator. Thus \(T^{-1}A\) is invertible for some \(A\in E\), and consequently \(A\) is invertible. Therefore $E\cap\operatorname{GL}(V)\neq\varnothing.$
\end{proof}

\begin{thm}\label{cor:LDer-invertible}
Let \(A\) be an algebra. Then \(A\) admits an invertible local derivation if and only if it admits an invertible derivation.
\end{thm}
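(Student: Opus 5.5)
The plan is to apply Theorem~\ref{thm:PIS-invertible} with $V=A$ and $E=\operatorname{Der}(A)$, using the identification $\operatorname{LocDer}(A)=\operatorname{Ref}_{a}(\operatorname{Der}(A))$ recorded after Definition~\ref{666}. The statement is an equivalence, so I will treat the two implications separately.

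For the easy direction, suppose $D\in\operatorname{Der}(A)$ is invertible. Every derivation is a local derivation: for each $x\in A$ take $D_x=D$. Hence $\operatorname{Der}(A)\subseteq\operatorname{Ref}_{a}(\operatorname{Der}(A))=\operatorname{LocDer}(A)$, and $D$ is an invertible local derivation.

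For the nontrivial direction, suppose $\Delta\in\operatorname{LocDer}(A)$ is invertible. Then $\Delta\in\operatorname{Ref}_{a}(\operatorname{Der}(A))\cap\operatorname{GL}(A)$, so this intersection is nonempty. The space $\operatorname{Der}(A)$ is a linear subspace of $\operatorname{End}(A)$, being the solution set of linear conditions in $D$. Theorem~\ref{thm:PIS-invertible} therefore gives $\operatorname{Der}(A)\cap\operatorname{GL}(A)\neq\varnothing$, which is exactly an invertible derivation of $A$.

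No real obstacle remains, since all the work is in Lemma~\ref{lem:PIS-id-invertible} and Theorem~\ref{thm:PIS-invertible}. The only hypotheses to check are that $\operatorname{Der}(A)$ is a linear subspace and that $A$ is finite-dimensional over the infinite field $\mathbb C$. Both hold by the paper's standing conventions, and they are what the determinant-polynomial argument in Lemma~\ref{lem:PIS-id-invertible} requires.
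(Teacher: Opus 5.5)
Your proposal is correct and is essentially the paper's argument: the paper obtains this theorem as an immediate consequence of Theorem~\ref{thm:PIS-invertible} applied to $E=\operatorname{Der}(A)$ via $\operatorname{LocDer}(A)=\operatorname{Ref}_{a}(\operatorname{Der}(A))$. The converse direction is the trivial inclusion $\operatorname{Der}(A)\subseteq\operatorname{LocDer}(A)$, exactly as you wrote.
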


Theorem~\ref{cor:LDer-invertible} has corresponding versions for the Leibniz derivations introduced in~\cite{KaygorodovPopov2016}.

\begin{defn}\label{def:f-Leibniz-derivation}\cite{KaygorodovPopov2016}
Let $A$ be an algebra, let $n\geq2$, and let $\mathcal F_n$ be the set of all arrangements of brackets in a product of length $n$. For $f\in\mathcal F_n$, denote the corresponding product by $[x_1,\ldots,x_n]_f$.

A linear map $D\in\operatorname{End}(A)$ is called an \emph{$f$-Leibniz derivation} if
\[
D([x_1,\ldots,x_n]_f)
=
\sum_{i=1}^{n}
[x_1,\ldots,D(x_i),\ldots,x_n]_f
\]
for all $x_1,\ldots,x_n\in A$. The space of all such maps is denoted by $\mathrm{LDer}_f(A)$.
\end{defn}

The left and right arrangements of length $n$ are, respectively,
\[
\begin{aligned}
\bigl[x_1,\ldots,x_n\bigr]_{l(n)}
&=(\cdots((x_1x_2)x_3)\cdots)x_n,\\
\bigl[x_1,\ldots,x_n\bigr]_{r(n)}
&=x_1(x_2(\cdots(x_{n-1}x_n)\cdots)).
\end{aligned}
\]
The corresponding $f$-Leibniz derivations are called \emph{left} and \emph{right Leibniz derivations of order $n$}, and their spaces are denoted by
\[
\mathrm{LDer}_{l(n)}(A)
\qquad\text{and}\qquad
\mathrm{LDer}_{r(n)}(A).
\]
Here, ``left'' and ``right'' refer only to the arrangement of brackets.

\begin{defn}\label{def:Leibniz-derivation}\cite{KaygorodovPopov2016}
A linear map $D\in\operatorname{End}(A)$ is called a \emph{Leibniz derivation of order $n$} if it is an $f$-Leibniz derivation for every $f\in\mathcal F_n$. Its space is denoted by $\mathrm{LDer}_n(A)$. Thus,
\[
\mathrm{LDer}_n(A)
=
\bigcap_{f\in\mathcal F_n}\mathrm{LDer}_f(A).
\]
We also set
\[
\begin{aligned}
\mathrm{LDer}(A)
&=\bigcup_{n\geq2}\mathrm{LDer}_n(A),\\
\mathrm{LDer}_{l}(A)
&=\bigcup_{n\geq2}\mathrm{LDer}_{l(n)}(A),
\qquad
\mathrm{LDer}_{r}(A)
=\bigcup_{n\geq2}\mathrm{LDer}_{r(n)}(A).
\end{aligned}
\]
\end{defn}

\begin{defn}\label{def:local-Leibniz-derivation}
Let $n\geq2$ and $f\in\mathcal F_n$. A linear map $\Delta\in\operatorname{End}(A)$ is called a \emph{local $f$-Leibniz derivation} if, for every $x\in A$, there exists $D_x\in\mathrm{LDer}_f(A)$ such that
\[
\Delta(x)=D_x(x).
\]
The set of all such maps is denoted by $\mathrm{LocLDer}_f(A)$.

For $f=l(n)$ and $f=r(n)$, these maps are called local left and local right Leibniz derivations of order $n$, respectively. Their sets are denoted by
\[
\mathrm{LocLDer}_{l(n)}(A)
\qquad\text{and}\qquad
\mathrm{LocLDer}_{r(n)}(A).
\]

A linear map $\Delta$ is called a \emph{local Leibniz derivation of order $n$} if, for every $x\in A$, there exists $D_x\in\mathrm{LDer}_n(A)$ such that $\Delta(x)=D_x(x)$. Its set is denoted by $\mathrm{LocLDer}_n(A)$. Finally, set
\[
\begin{aligned}
\mathrm{LocLDer}(A)
&=\bigcup_{n\geq2}\mathrm{LocLDer}_n(A),\\
\mathrm{LocLDer}_{l}(A)
&=\bigcup_{n\geq2}\mathrm{LocLDer}_{l(n)}(A),
\qquad
\mathrm{LocLDer}_{r}(A)
=\bigcup_{n\geq2}\mathrm{LocLDer}_{r(n)}(A).
\end{aligned}
\]
In each local definition, the order $n$, and the arrangement $f$ when specified, are fixed and do not depend on $x$.
\end{defn}

For every fixed $f\in\mathcal F_n$, the defining identity is linear in $D$, so $\mathrm{LDer}_f(A)$ is a linear subspace of $\operatorname{End}(A)$. Moreover,
\[
\mathrm{LocLDer}_f(A)
=
\operatorname{Ref}_{a}\bigl(\mathrm{LDer}_f(A)\bigr).
\]
Since $\mathrm{LDer}_n(A)$ is an intersection of linear subspaces, it is also a linear subspace, and
\[
\mathrm{LocLDer}_n(A)
=
\operatorname{Ref}_{a}\bigl(\mathrm{LDer}_n(A)\bigr).
\]
Thus, Theorem~\ref{thm:PIS-invertible} applies directly to all these spaces.

\begin{thm}\label{thm:Leibniz-invertible}
Let $A$ be an algebra and let $n\geq2$. For every $f\in\mathcal F_n$, the algebra $A$ admits an invertible local $f$-Leibniz derivation if and only if it admits an invertible $f$-Leibniz derivation.

In particular, the same equivalence holds, with the same order, for left Leibniz derivations, right Leibniz derivations, and Leibniz derivations of order $n$. It also holds for each of these three notions without a prescribed order.
\end{thm}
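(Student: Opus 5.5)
The plan is to reduce everything to Theorem~\ref{thm:PIS-invertible}, applied to the appropriate subspace of $\operatorname{End}(A)$. First fix $n\geq2$ and $f\in\mathcal F_n$, and put $E=\mathrm{LDer}_f(A)$. This is a linear subspace, since the defining identity is linear in $D$. As already noted, $\mathrm{LocLDer}_f(A)=\operatorname{Ref}_{a}(E)$. The inclusion $E\subseteq\operatorname{Ref}_{a}(E)$ is trivial (take $D_x=D$), which gives the direction ``invertible $f$-Leibniz derivation $\Rightarrow$ invertible local one''. Conversely, an invertible element of $\operatorname{Ref}_{a}(E)$ shows that $\operatorname{Ref}_{a}(E)\cap\operatorname{GL}(A)\neq\varnothing$. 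Theorem~\ref{thm:PIS-invertible} then yields $E\cap\operatorname{GL}(A)\neq\varnothing$, which is the required invertible $f$-Leibniz derivation.

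The fixed-order special cases follow from the same argument. Taking $f=l(n)$ or $f=r(n)$ gives the left and right versions. For Leibniz derivations of order $n$, use $E=\mathrm{LDer}_n(A)=\bigcap_{f\in\mathcal F_n}\mathrm{LDer}_f(A)$. This is again a linear subspace, and $\operatorname{Ref}_{a}(E)=\mathrm{LocLDer}_n(A)$, so Theorem~\ref{thm:PIS-invertible} applies verbatim.

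For the statements without a prescribed order, the only care needed is with the unions. Suppose $\Delta\in\mathrm{LocLDer}(A)$ is invertible. By definition, $\Delta\in\mathrm{LocLDer}_n(A)$ for some single $n$, because the order is fixed and does not depend on $x$. The order-$n$ case then gives an invertible element of $\mathrm{LDer}_n(A)\subseteq\mathrm{LDer}(A)$. For the converse, an invertible $D\in\mathrm{LDer}_n(A)$ lies in $\mathrm{LocLDer}_n(A)\subseteq\mathrm{LocLDer}(A)$. The left and right unions are handled identically.

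I do not expect a genuine obstacle here. The whole difficulty is contained in Lemma~\ref{lem:PIS-id-invertible}, which is already proved. The one point to check is that each space in question is a linear subspace, which is required by Theorem~\ref{thm:PIS-invertible}. That is why I work with a fixed $n$ (and a fixed $f$ where relevant) rather than with the unions, which need not be subspaces.
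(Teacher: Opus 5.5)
Your proposal is correct and follows essentially the same route as the paper. The paper also writes each local space as $\operatorname{Ref}_{a}$ of the linear subspace $\mathrm{LDer}_f(A)$ or $\mathrm{LDer}_n(A)$ and applies Theorem~\ref{thm:PIS-invertible}, with the trivial direction coming from $E\subseteq\operatorname{Ref}_{a}(E)$. Your explicit reduction of the order-free unions to a single fixed order $n$ spells out a step the paper leaves implicit.
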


\begin{rem}
For $n=2$, all the preceding notions reduce to ordinary derivations and local derivations. Hence, Theorem~\ref{cor:LDer-invertible} is the order-two case of Theorem~\ref{thm:Leibniz-invertible}.

Jacobson~\cite{Jacobson1955} proved that a Lie algebra admitting an invertible derivation is nilpotent. For Leibniz derivations, Moens~\cite{Moens2013} proved that a Lie algebra is nilpotent if and only if it admits an invertible Leibniz derivation. This result is recalled by Kaygorodov and Popov~\cite{KaygorodovPopov2016}. They also proved that a Malcev, Jordan, or $(-1,1)$-algebra is nilpotent if and only if it admits an invertible left Leibniz derivation.

Combining these results with Theorem~\ref{thm:Leibniz-invertible} shows that invertible local Leibniz derivations also characterize nilpotency. More precisely, a Lie algebra is nilpotent if and only if it admits an invertible local Leibniz derivation of some order. Likewise, a Malcev, Jordan, or $(-1,1)$-algebra is nilpotent if and only if it admits an invertible local left Leibniz derivation of some order. 
\end{rem}

\section*{Acknowledgements}
The authors thank the anonymous referees for their valuable suggestions and comments.

\section*{Conflicts of interest}
The authors declare that they have no conflicts of interest.

\end{document}